\tikzstyle{knot}=[thick]
\tikzstyle{knott}=[thick,preaction={draw, line width=4pt, white}]
\tikzstyle{crossing}=[circle,fill=white,inner sep=0,outer sep=0,minimum width=3pt]
\tikzstyle{dot}=[circle,fill=black,inner sep=0,outer sep=0,minimum width=4pt]
\newcommand{\tikzcircle}{\vcenter{\hbox{\begin{tikzpicture}[scale=0.2]\draw[knot] (0,0) circle (1);\end{tikzpicture}}}}
\newcommand{\tikzsaddle}{\vcenter{\hbox{\begin{tikzpicture}[scale=0.4]\draw[knot] (0,0) to[out=45,in=-45] (0,1);\draw[knot] (1,0) to[out=135,in=-135] (1,1);\end{tikzpicture}}}}
\definecolor{darkblue}{rgb}{0,0,0.4} 
\numberwithin{equation}{section}
\newtheorem{lem}{Lemma}[section]               
\newtheorem{theorem}[lem]{Theorem}
\newtheorem{lemma}[lem]{Lemma}
\newtheorem{corollary}[lem]{Corollary}
\theoremstyle{definition}
\theoremstyle{remark}
\newtheorem{example}[lem]{Example}
\numberwithin{figure}{section}
\numberwithin{table}{section}
\newcommand{\R}{\mathbb{R}}
\newcommand{\QQ}{\mathbb{Q}}
\newcommand{\wt}{\widetilde}
\newcommand{\ol}{\overline}
\newcommand{\from}{\colon}
\newcommand{\too}{\longrightarrow}
\DeclareMathOperator{\Id}{Id}
\DeclareMathOperator{\Tor}{Tor}
\newcommand{\XTorsion}{\mathscr{T}}
\newcommand{\XOrder}{\mathrm{xo}}
\newcommand{\rk}{\mathrm{rk}}
\newcommand{\KhL}{\mathit{Kh}_\mathscr{L}}
\newcommand{\KhCxL}{\mathit{CKh}_\mathscr{L}}
\newcommand{\RR}{\R}
\newcommand{\FF}{\mathbb{F}}
\newcommand*{\defeq}{\mathrel{\vcenter{\baselineskip0.5ex \lineskiplimit0pt
                     \hbox{\scriptsize.}\hbox{\scriptsize.}}}%
                     =}
\begin{document}

%%% Local Variables: 
%%% mode: latex
%%% TeX-master: "CK"
%%% End: 
 
\title{Ribbon distance and Khovanov homology}

\author{Sucharit Sarkar}
\thanks{\texttt{SS was supported by NSF Grant DMS-1643401.}}
\email{\href{mailto:sucharit@math.ucla.edu}{sucharit@math.ucla.edu}}
\address{Department of Mathematics, University of California, Los Angeles, CA 90095}

%\subjclass[2010]{\href{http://www.ams.org/mathscinet/search/mscdoc.html?code=57M25,55P42}{57M25,55P42}}

\keywords{}

\date{}

\begin{abstract}
  We study a notion of distance between knots, defined in terms of the
  number of saddles in ribbon concordances connecting the knots. We
  construct a lower bound on this distance using the $X$-action on
  Lee's perturbation of Khovanov homology.
\end{abstract}
\maketitle
\vspace{-1cm}
%\tableofcontents

%%% Local Variables: 
%%% mode: latex
%%% TeX-master: "CK"
%%% End: 

\section{Introduction}\label{sec:introduction}

Ever since its inception, Khovanov
homology~\cite{Kho-kh-categorification}, a categorification of the
Jones polynomial, has attracted tremendous interest and has produced
an entire new field of research. It has been generalized in several
orthogonal directions \cite{Bar-kh-tangle-cob, Kho-kh-tangles,
  Lee-kh-endomorphism, KR-kh-matrixfactorizations} and continues to
generate intense activity.  While the primary focus of the field has
been categorification of various low-dimensional topological
invariants---endowing them with new algebraic and higher categorical
structure---it has also produced a small number of stunning
applications in low-dimensional topology as a
by-product. Specifically, Lee's perturbation of Khovanov homology has
been instrumental in producing several applications for knot
cobordisms; the author's personal favorites are Rasmussen's proof of
the Milnor conjecture~\cite{Ras-kh-slice} (bypassing the earlier
gauge-theoretic proof by Kronheimer-Mrowka) and Piccirillo's proof
that the Conway knot is not slice~\cite{Pic-kh-conway}.

This is a very short paper, so we will not burden it with a long
introduction. Let us quickly describe the main results, and proceed
onto the next section.

We define a notion of distance between knots, using the number of
saddles in ribbon concordances connecting the knots. This distance is
finite if and only if the knots are concordant, but it is hard to find
examples of knots arbitrarily large finite distance apart. Using the
$X$-action on Lee's perturbation of Khovanov homology, we construct a
lower bound on this distance, which is the main result of this paper.

\begin{theorem}\label{thm:main}
  If $d$ is the ribbon distance (defined in
  Section~\ref{sec:ribbon-complex}) between knots $K,K'$, then
  \[
  (2X)^d\KhL(K)\cong (2X)^d\KhL(K')
  \] 
  where $\KhL$ is Lee's perturbation of Khovanov homology.
\end{theorem}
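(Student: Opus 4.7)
The plan is to adapt the Levine--Zemke-style theorem (that ribbon concordances induce split injections on Khovanov-type homologies) to Lee's perturbation, and then to sharpen it to quantify the cokernel in terms of the saddle count.

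First, I would unpack the ribbon distance: by the definition in \Section{ribbon-complex} (presumably a zigzag distance counting saddles), there is a sequence of ribbon concordances $K = K_0 \leftrightarrow K_1 \leftrightarrow \cdots \leftrightarrow K_m = K'$ with $\sum n_i = d$ saddles. Since Lee's cobordism maps are $X$-equivariant (the $X$-action is natural under cobordisms), they commute with multiplication by $(2X)^k$, so composing isomorphisms across the zigzag reduces the theorem to the atomic case of a single ribbon concordance $C \colon L \to L'$ with $n$ saddles, for which the claim becomes $(2X)^n \KhL(L) \cong (2X)^n \KhL(L')$.

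For the atomic case, I would prove two sub-lemmas about $F_C \colon \KhL(L) \to \KhL(L')$: (i) $F_C$ is a split injection with an $X$-equivariant left inverse $G_C$; and (ii) the cokernel of $F_C$ is annihilated by $(2X)^n$. Granted these, the required isomorphism is constructed as follows. By $X$-equivariance, $F_C$ restricts to a map $(2X)^n \KhL(L) \to (2X)^n \KhL(L')$; this restriction is injective by (i). For surjectivity, given $w = (2X)^n w_0 \in (2X)^n \KhL(L')$, sub-lemma (ii) yields $w \in \mathrm{image}(F_C)$, so $w = F_C(G_C(w))$; and $X$-equivariance of $G_C$ gives $G_C(w) = (2X)^n G_C(w_0) \in (2X)^n \KhL(L)$, placing the preimage in the target subgroup.

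The main obstacle is sub-lemma (ii). The expected mechanism is the Lee-theoretic Frobenius identity $m \circ \Delta = 2X \cdot \mathrm{id}_V$ on $V = \mathbb{Z}[X]/(X^2 - 1)$ (splitting and then merging a single circle equals multiplication by $2X$). Each saddle of $C$ should contribute one such factor of $2X$ when lifting a class across the cokernel. To make this rigorous, I would either decompose $C$ into elementary pieces (births, merge saddles, split saddles) and bound the cokernel inductively, or work at the chain level and construct, for every $w \in \KhL(L')$, an explicit preimage of $(2X)^n w$ under $F_C$ by composing $G_C$ with cobordism-level $X$-dot moves. The delicate bookkeeping lies in the interplay of merges, splits, and births---only split-then-merge configurations cleanly yield the $2X$ factor, so the argument must show that any other local configuration costs at most one $2X$ per saddle overall.
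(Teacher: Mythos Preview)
Your strategy—reduce to a single ribbon concordance, prove the induced map is a split injection whose cokernel is killed by $(2X)^n$, then restrict to $(2X)^n$-multiples—is valid and organized differently from the paper. Two corrections first: the ribbon distance is the \emph{maximum} number of saddles among the steps of the zigzag, not $\sum n_i$ (your reduction still works, using $X$-equivariance to pass from each $(2X)^{n_i}$ to the common $(2X)^d$); and in the paper's convention a ribbon concordance has saddles and deaths, so the injective map is $\KhL(\ol F)$ rather than $\KhL(F)$. The real gap is sub-lemma~(ii), and the ``delicate bookkeeping'' you worry about does not actually arise. Write your injection as $F_C=M\circ B$ with $B$ the $n$ births and $M$ the $n$ merges. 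The self-map $M\circ \ol M$ of $\KhL(L')$ (reverse the merges as splits, then re-merge) equals $\pm(2X)^n$ by $n$ nested applications of $m\circ\Delta=2X$, since the saddles occur as $\ol m_n\cdots\ol m_1\, m_1\cdots m_n$ and cancel from the inside out—no mixed configurations appear. Then for any $w$ one has $(2X)^n w=\pm M(\ol M(w))$; and since the cobordism underlying $M$ is connected, any $X$-label on a $U$-factor can be traded for an $X$-action on $L'$ (\Lemma{dotted-map-same}), so $M$ of \emph{anything} lies in $\image(M\circ B)=\image(F_C)$. This proves (ii). Your proposed preimage $(2X)^nG_C(w)$ is in fact correct, but checking it directly amounts to the same computation.

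The paper takes a different route: it introduces a single auxiliary cobordism $W\colon K\to K$ (perform the saddles and isotopy of $F$, then reverse them, stopping short of the deaths) and computes $\image(\KhL(W))$ in two ways. One computation is your $m\circ\Delta=2X$ applied $d$ times, giving $(2X)^d\KhL(K)$. The other uses a \emph{second} neck-cutting relation (\Lemma{neck-cutting-1}, for death--birth pairs) to rewrite $\KhL(W)$ as $\pm 2^d\,\KhL(\ol F)\circ X^d\circ\KhL(F)$, and then Levine--Zemke identifies the image with $(2X)^d\KhL(K')$. Both arguments use the same ingredients and yield the same explicit isomorphism (the restriction of $\KhL(\ol F)$); the paper's ``one map, two images'' packaging simply avoids separating the injectivity from the cokernel bound, at the cost of needing the extra neck-cutting lemma.
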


In particular, the distance of $K$ from the unknot defines a notion of
complexity for $K$, and it has a lower bound coming from Khovanov
homology. (Coincidentally, this lower bound agrees with the lower
bound on unknotting number from~\cite{AD-kh-unknotting}.)

\begin{corollary}\label{cor:main}
  For any knot $K$, and over any field $\FF$ with $2\neq 0$, the
  extortion order $\XOrder(K)$ (defined in Section~\ref{sec:x-action})
  is a lower bound for the ribbon distance of $K$ from the unknot.
\end{corollary}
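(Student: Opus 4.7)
The plan is to apply Theorem~\ref{thm:main} with $K'$ equal to the unknot~$U$ and to let $d$ denote the ribbon distance of $K$ from the unknot. The theorem then immediately supplies an isomorphism
\[
(2X)^d \KhL(K) \;\cong\; (2X)^d \KhL(U),
\]
so the entire question reduces to understanding the right-hand side.

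To that end, I would recall that Lee's deformation of the Frobenius algebra is cut out by the relation $X^2 = 1$, so that $\KhL(U) \cong \FF[X]/(X^2-1)$. Over a field in which $2 \neq 0$, the element $2X$ is actually a unit of this ring: indeed $(2X)(2X) = 4X^2 = 4$, hence $(2X)^{-1} = 2^{-1}X$. Consequently multiplication by $(2X)^n$ is an automorphism of $\KhL(U)$ for every $n \geq 0$, and the right-hand side of the displayed isomorphism equals $\KhL(U)$ itself. We therefore obtain
\[
(2X)^d \KhL(K) \;\cong\; \KhL(U).
\]

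The corollary now follows by unpacking the definition of the extortion order $\XOrder(K)$ from Section~\ref{sec:x-action}: this integer is set up to be the smallest $n \geq 0$ for which the $(2X)^n$-image of $\KhL(K)$ has achieved its stable form, matching the unknot. The display above exhibits $d$ as such an $n$, so $d \geq \XOrder(K)$, which is precisely the claim.

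The main obstacle, such as it is, is really a matter of bookkeeping: one must verify that the isomorphism supplied by Theorem~\ref{thm:main} intertwines the $X$-actions on the two sides (so that passing to the $(2X)^n$-image is well-posed), and one must match the precise wording of $\XOrder(K)$ in Section~\ref{sec:x-action} with the formulation used above. Both are built into the setup of the $X$-action on $\KhL$, and once granted the corollary is a one-line consequence of the main theorem.
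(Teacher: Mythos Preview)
Your overall strategy---apply Theorem~\ref{thm:main} with $K'=U$, then compare with the unknot---is exactly the paper's approach. But the middle paragraph misidentifies the algebra. In this paper $\KhL$ is built from the Frobenius algebra $R[T][X]/(X^2=T)$ over $R[T]$; the variable $T$ is \emph{not} specialised to $1$. Thus $\KhL(U)\cong\Sigma^{0,1}\FF[X]$ is a free rank-one module over the polynomial ring $\FF[X]$, and $2X$ is certainly not a unit there. (If one sets $T=1$ as in Lee's original theory, then over a field with $2\neq0$ the homology of every knot collapses to $\FF^2$, the $X$-torsion vanishes, and $\XOrder$ would be identically zero---so that specialisation cannot be what is meant.)

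Relatedly, your description of $\XOrder(K)$ does not match Section~\ref{sec:x-action}: it is defined as the smallest $n$ with $X^n\XTorsion(K)=0$, where $\XTorsion(K)$ is the $X$-torsion submodule of $\KhL(K)$. The missing ingredient is the (non-canonical) splitting
\[
\KhL(K)\cong\Sigma^{0,s(K)+1}\FF[X]\oplus\XTorsion(K),
\]
valid over any field with $2\neq0$. Applying $X^d$ and comparing with $X^d\KhL(U)\cong\Sigma^{0,1-2d}\FF[X]$ via Theorem~\ref{thm:main} forces $X^d\XTorsion(K)=0$, hence $d\geq\XOrder(K)$. Once you replace the ``$2X$ is a unit'' paragraph with this splitting argument, the proof is complete and identical to the paper's.
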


\subsection*{Acknowledgment} There has been a sudden abundance of
short cute papers on applications of Khovanov homology to knot
cobordisms, and in particular ribbon concordances
\cite{AD-kh-unknotting,MM-kh-knight,LZ-kh-ribbon}; the present paper
is a result of the author's desire to join the bandsumwagon. Some of
the ideas of this paper are recycled from the above-mentioned papers,
and therefore, he is grateful to their authors. He would also like to
thank Brendan Owens for pointing out some lower bounds for the band
number and Robert Lipshitz for suggesting the wordplay in this
paragraph.

\section{Knot cobordisms}\label{sec:cobordisms}
A \emph{cobordism} from a link $K_0\subset\RR^3\times\{0\}$ to a link
$K_1\subset\RR^3\times\{1\}$ is a properly embedded oriented surface
$F\subset\RR^3\times[0,1]$\footnote{We are working with cobordisms in
  $\RR^3\times[0,1]$ as opposed to the more standard $S^3\times[0,1]$
  for a couple of reasons: naturality of Khovanov cobordism maps has
  only been established in $\RR^3\times[0,1]$ (even up to sign); and a
  subtle sign discrepancy for dotted cobordism maps can be resolved in
  $\RR^3\times[0,1]$.} with boundary the union of $K_1$ and the
orientation-reversal of $K_0$. Call the projection
$\pi_t\from\RR^3\times[0,1]\to[0,1]$ the time function, and assume
$\pi_t|_F$ is Morse; its index $0,1,2$ critical points are called
births, saddles, and deaths. The cobordism may be viewed as a movie as
time runs from $0$ to $1$. For regular values $t$ of $\pi_t|_F$,
$K_t\defeq F\cap(\RR^3\times\{t\})$ is a link; $K_t$ changes by
isotopy with time, with the following local modifications occurring at
births, saddles, and deaths:
\[
\begin{tikzpicture}
\begin{scope}[xshift=-7cm]
\draw[->] (0,0) to node[above] {birth} (1,0);
\draw[knot] (2,0) circle (0.5);
\end{scope}

\begin{scope}[xshift=0cm]
\draw[knot,->] (-2,-0.5) to[out=45,in=-45] (-2,0.5);
\draw[knot,<-] (-1,-0.5) to[out=135,in=-135] (-1,0.5);
\draw[->] (-0.5,0) to node[above] {saddle} (0.5,0);
\draw[knot,<-] (1,0.5) to[out=-45,in=-135] (2,0.5);
\draw[knot,->] (1,-0.5) to[out=45,in=135] (2,-0.5);
\end{scope}

\begin{scope}[xshift=7cm]
\draw[->] (-1,0) to node[above] {death} (0,0);
\draw[knot] (-2,0) circle (0.5);
\end{scope}

\end{tikzpicture}
\]
We usually work with the projection $\pi_R\from
\RR^2\times\RR\times[0,1]\to\RR^2$, and represent each
$K_t\subset\RR^3\times\{t\}$ by the link diagram $\pi_R(K_t)$. We then
represent the cobordism as a \emph{movie of link diagrams}; usually
$\pi_R(K_t)$ changes by planar isotopy with time, with Reidemeister
moves and the above moves happening at certain time instances (which
by genericity we will assume to be distinct). Two such movies
represent isotopic cobordisms (relative $K_0$ and $K_1$) if and only
if they are related by a sequence of movie
moves~\cite{CRS-knot-knottedsurfaces}.

If $F$ is diffeomorphic to a cylinder, then $F$ is said to be a
\emph{concordance}\footnote{In old literature, the word `cobordism'
  was used instead of `concordance'.} from the knot $K_0$ to the knot
$K_1$. The concordance is said to be \emph{ribbon} if there are no
births~\cite{Gor-top-ribbon}. The famous slice-ribbon conjecture
states that every slice knot has a ribbon concordance to the unknot.

We will also be interested in \emph{dotted cobordisms}, that is,
cobordisms $F$ decorated with finite number of dots in the
interior. We can also represent them by movies of link diagrams,
except now dots are present at certain instances. As before, by
genericity, we will assume these instances are separate from the
births, saddles, deaths, and the Reidemeister moves; moreover, at each
such $t$, the link $K_t$ contains exactly one dot and its projection
to the link diagram $\pi_R(K_t)$ is away from the crossings.

\begin{lemma}\label{lem:dotted-movie-move}
  Assume $F,F'$ are dotted cobordisms (in generic position) with the
  same underlying surface and the same number of dots on each
  component, but differing only in the placement of the dots.  Then
  the movies for $F$ and $F'$ are related a sequence of the following
  movie moves.
  \begin{enumerate}[leftmargin=*]
  \item Far commutation: We may switch the order of the following operations,
    \begin{enumerate}[leftmargin=*]
    \item adding a dot, and then adding another dot;
    \item adding a dot, and then performing a birth;
    \item adding a dot, and then performing a saddle;
    \item performing a death, and then adding a dot;
    \item adding a dot, and then performing a Reidemeister move
      far away.
    \end{enumerate}
  \item Moving dots on link diagrams: If we are adding a dot on one
    side of a crossing on a link diagram $\pi_R(K_t)$, then we can
    instead add it on the other side of the crossing.
  \end{enumerate}
\end{lemma}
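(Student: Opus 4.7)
The plan is to reduce to moving a single dot at a time, then interpret the motion of the dot as a path on the underlying surface $F$ that gets translated back into elementary movie moves. First, since $F$ and $F'$ have the same number of dots on each component, I can iteratively slide each dot of $F$ to its intended target position in $F'$ one at a time, keeping the others fixed. So fix a single dot $p_0$ on a component $C\subseteq F$, let $p_1$ be its target position in $F'$, and choose an embedded path $\gamma\colon[0,1]\to C$ from $p_0$ to $p_1$ avoiding all the other dots.

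Next I would perturb $\gamma$ into general position with respect to the time function $\pi_t|_F$ and the projection $\pi_R|_F$, so that $\pi_t\circ\gamma$ has only finitely many critical values, these values are disjoint from the birth, saddle, death, and Reidemeister instants and from the time coordinates of the other dots, and $\gamma$ meets the singular locus of $\pi_R|_F$ (the pre-images of crossings in each $\pi_R(K_t)$) transversally at isolated parameters. At every regular parameter $s\in[0,1]$, the pair $(\pi_t\gamma(s),\pi_R\gamma(s))$ is a legitimate placement of a dot in the movie, so $\gamma$ produces a continuous family of dotted movies interpolating between $F$ and $F'$.

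The argument is then local between consecutive special parameter values. When the time coordinate of $\gamma$ is locally constant, the dot is being moved along the link $K_{t(s)}$; this is planar isotopy of the diagram, which is already absorbed by movie equivalence, except at isolated crossings of the diagram, which is exactly move (2). When the spatial coordinate is locally constant and the time coordinate crosses some movie event located a bounded distance away in $\RR^2$, the change is one of the far commutation moves (1a)--(1e), depending on whether the event is another dot, a birth, a saddle, a death, or a Reidemeister move. Critical points of $\pi_t\circ\gamma$ themselves require no new move, since reversing the time direction of the dot at a point where nothing happens can be realized by composing two far commutations with whatever event lies just beyond the critical value.

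The main obstacle I anticipate is the genericity argument near saddles and Reidemeister moves, where the combinatorial structure of the diagram changes abruptly: I must arrange that the spatial projection of $\gamma$ does not sit on the saddle arcs or inside the Reidemeister tangle at the exact instant of the event. This can always be achieved by a small perturbation of $\gamma$ on $F$, since the saddle sites and Reidemeister tangles occur at isolated times and occupy bounded regions in $\RR^2$, but writing this out carefully is where the bulk of the work lives. A case analysis of the local model near each event type then confirms that moves (1)--(2) suffice, and concatenating the moves obtained along $\gamma$ yields the desired sequence relating the movies for $F$ and $F'$.
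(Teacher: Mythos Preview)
Your approach is essentially the paper's, dressed in more formal general-position language. The paper argues directly: use move~(2) to slide a dot freely along $K_t$, and use the far commutations~(1) to slide it in the time direction; the only subtlety is when the dot sits on a strand participating in a Reidemeister move, which is exactly your ``main obstacle'' and is resolved just as you indicate---first push the dot off the affected strands via move~(2), then apply~(1e). Your path $\gamma$ on $F$ in general position is simply a systematic way to organize this same spatial-then-temporal reduction.

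One small correction: your handling of critical points of $\pi_t\circ\gamma$ is garbled. At such a point the path is tangent to a time slice, so the dot is locally moving along $K_t$; this is absorbed by planar isotopy and move~(2), not by ``composing two far commutations with whatever event lies just beyond the critical value.'' No extra move is needed there.
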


\begin{proof}
  We can use the second movie move to move dots freely on $K_t$ for
  each $t$. To move dots in the time direction, we use the first movie
  move, which allows us to move dots past each other, and also past
  births, deaths, saddles, and Reidemeister moves; all the
  possibilities are listed, except the following.
  \begin{enumerate}[leftmargin=*,label=(\alph*)]
  \item Perform a birth, and then add a dot on the newborn
    unknot component. In this case, it is impossible to switch the
    order.
  \item Add a dot to a small unknot component, and then perform a
    death on that component. This is simply the time-reversal of the
    previous case.
  \item Add a dot on some strand of the knot diagram, and then perform
    a Reidemeister move that involves that strand; see below for an
    example with Reidemeister II move.
    \[
    \begin{tikzpicture}
      \begin{scope}[xshift=-4cm]
        \draw[knott] (0,0) to[looseness=2,out=45,in=-45] (0,2);
        \draw[knott] (1,0) to[looseness=2,out=135,in=-135] (1,2);
        \draw[->] (2,1) -- node[above] {dot} node[anchor=north] {
          \begin{tikzpicture}[scale=0.5]
          \draw[knott,-] (0,0) to[looseness=2,out=45,in=-45] (0,2);
          \draw[knott,-] (1,0) to[looseness=2,out=135,in=-135] (1,2);
          \node[dot,anchor=center] at (0.2,1) {};
          \end{tikzpicture}
        } (3,1);
      \end{scope}
      \begin{scope}
        \draw[knott] (0,0) to[looseness=2,out=45,in=-45] (0,2);
        \draw[knott] (1,0) to[looseness=2,out=135,in=-135] (1,2);
        \draw[->] (2,1) -- node[above] {RII} (3,1);
      \end{scope}
      \begin{scope}[xshift=4cm]
        \draw[knott] (0,0) to[looseness=1,out=45,in=-45] (0,2);
        \draw[knott] (1,0) to[looseness=1,out=135,in=-135] (1,2);
      \end{scope}
    \end{tikzpicture}
    \]
    However, in this case, we may move the dot on the link diagram
    (using the second movie move) away from the strands involved in
    the Reidemeister move, and then use far commutation with the
    Reidemeister move (using the first movie move) to change the
    temporal order of the dot addition and the Reidemeister move.\qedhere
  \end{enumerate}
\end{proof}

\section{Ribbon complexities}\label{sec:ribbon-complex}

There are certain notions of complexities that we can associate to
ribbon concordances. If $K$ is a ribbon knot---that is, if $K$ has a
ribbon concordance to the unknot $U$---then we can define the
\emph{band number} $b(K)$ to be the smallest number of saddles in a
ribbon concordance $K\to U$; this is also the smallest number of bands
if we write $K$ as a band sum of an unlink. This number is usually
called the \emph{ribbon-fusion number} and has lower bounds coming
from the Jones polynomial~\cite{Kan-top-band}; more classically, it is
bounded below by half of $\rk(H_1(\Sigma_K))$---the smallest number of
generators for the first homology of the double branched
cover~\cite{NN-top-ribbon}.

Each ribbon knot bounds a ribbon disk in $\RR^3$, that is, an immersed
disk with only ribbon singularities (as shown with the thick line in
the leftmost figure below). So we may define the \emph{ribbon number}
$r(K)$ to be smallest number of ribbon singularities for ribbon disks
bounding $K$. We may perform saddles near each ribbon singularity (as
shown below) to convert $K$ to an unlink, so $r(K)$ is bounded below
by $b(K)$. A nice argument shows that the knot genus $g(K)$ also
provides a lower bound for $r(K)$~\cite{Fox-top-ribbon}.
\[
\begin{tikzpicture}
  \begin{scope}

    \fill[black!30] (2,0.5) rectangle (2.5,-0.7);
    \draw[ultra thin, fill=black!30] (0,0) -- (3,0) -- (4,1) --(1,1)--cycle;
    \fill[black!30] (2,1.7) rectangle (2.5,0.5);

    \foreach \i in {2,2.5}{
      \draw[knot] (\i,1.7) -- (\i,0.5);
      \draw[knot] (\i,0) --(\i,-0.7);
      \draw[knot,dotted] (\i,0.5)--(\i,0);
    }
    
    \draw[ultra thick] (2,0.5) --(2.5,0.5);
  \end{scope}
  \draw[->] (4,0) --node[above] {saddle} ++(1,0);
  \begin{scope}[xshift=5.5cm]
    \fill[black!30] (2,0.5) rectangle (2.5,-0.7);
    \draw[ultra thin,fill=black!30] (0,0) -- (3,0) -- (4,1) --(1,1)--cycle;
    \fill[black!30] (2,1.7) rectangle (2.5,0.5);

    \foreach \i in {2,2.5}{
      \draw[knot] (\i,0) --(\i,-0.7);
    }
      \draw[knot] (2,1.7) -- (2,1) to[looseness=2,out=-90,in=-90] (2.5,1) -- (2.5,1.7);
      \draw[knot,dotted] (2,0) to[looseness=2,out=90,in=90] (2.5,0);
        
  \end{scope}
\end{tikzpicture}
\]

We may also define a notion of distance on knots coming from ribbon
concordances. For any two knots $K,K'$, define the \emph{ribbon distance}
$d(K,K')$ to be the smallest $k$ such that there is a sequence of
knots $K=K_0,K_1,\dots,K_{n-1},K_n=K'$ from $K$ to $K'$ and a ribbon
concordance (in some direction) between every consecutive pair
$K_i,K_{i+1}$ with at most $k$ saddles. The following properties are
immediate.
\begin{enumerate}[leftmargin=*]
\item $d(K,K')<\infty$ if and only if $K$ and $K'$ are
  concordant. (For the slightly non-obvious direction, note that if
  $K$ and $K'$ are concordant, then there is some $K''$ with ribbon
  concordances to both $K$ and $K'$, cf.~\cite{Gor-top-ribbon}.)
\item $d(K,K')=0$ if and only if $K$ and $K'$ are isotopic.
\item $d(K,K')=d(K',K)$.
\item $d(K,K'')\leq\max\{d(K,K'),d(K',K'')\}$, and hence $d$
  satisfies the triangle inequality.
\end{enumerate}
This notion of distance complements the more standard notion of
\emph{cobordism distance} which is defined to be the smallest genus of
a cobordism between the two knots. (Cobordism distance between any two
knots is finite, and is zero if and only if the knots are concordant.)

For any slice knot $K$, its distance from the unknot, $d(K,U)$,
therefore provides yet another notion of complexity. It is clear from
the definitions that $d(K,U)\leq b(K)$.

\begin{example}
  Let $K_1$ be the connect sum of the positive and the negative
  trefoil, and let $K_n$ be the connect sum of $n$ copies of $K_1$. We
  have $r(K_1)=g(K_1)=\rk(H_1(\Sigma_{K_1}))=2$ and
  $b(K_1)=d(K_1,U)=1$; $K_1$ can be obtained by adding a band (shown
  by the thick line below) to the $2$-component unlink, which intersects
  the natural disks bounding the unlink in $2$ ribbon singularities.
  \[
  \begin{tikzpicture}[scale=1]
    \begin{scope}
      \foreach \i [count=\j from 1] in {0,1,2,3}{
        \coordinate (a\j) at (0.5,\i);
        \coordinate (b\j) at (-0.5,\i);
      }
      \coordinate (c) at (0,0);
      \coordinate (d) at (0,3);

      \draw[knott] (a4) to[looseness=2,out=0,in=0] (a2);
      \draw[knott] (a3) to[looseness=2,out=0,in=0] (a1);

      \draw[knott] (b4) to[looseness=2,out=180,in=180] (b2);
      \draw[knott] (b3) to[looseness=2,out=180,in=180] (b1);

      \draw[knott] (b3) -- (a3);
      \draw[knott, ultra thick] (c) -- (d);
      \draw[knot] (b4) -- (a4);
      \draw[knott] (b2) -- (a2);
      \draw[knot] (b1) -- (a1);

    \end{scope}

    \begin{scope}[xshift=6.5cm]
      \foreach \i [count=\j from 1] in {0,1,2,3}{
        \coordinate (a\j) at (0.5,\i);
        \coordinate (b\j) at (-0.5,\i);
      }
      \coordinate (c) at (0,0);
      \coordinate (d) at (0,3);
      \coordinate (a0) at (0.1,1.5);
      \coordinate (b0) at (-0.1,1.5);

      \draw[knott] (a4) to[looseness=2,out=0,in=0] (a2);
      \draw[knott] (a3) to[looseness=2,out=0,in=0] (a1);

      \draw[knott] (b4) to[looseness=2,out=180,in=180] (b2);
      \draw[knott] (b3) to[looseness=2,out=180,in=180] (b1);

      \draw[knott] (a1) to[looseness=0.8,out=180,in=-90] (a0);
      \draw[knott] (b1) to[looseness=0.8,out=0,in=-90] (b0);

      \draw[knott] (b3) -- (a3);

      \draw[knott] (a4) to[looseness=0.8,out=180,in=90] (a0);
      \draw[knott] (b4) to[looseness=0.8,out=0,in=90] (b0);

      \draw[knott] (b2) -- (a2);

    \end{scope}
    % \foreach \i [count=\j from 0] in {0.5,4,5}{
    %   \coordinate (a\j) at (0,\i);
    %   \coordinate (b\j) at (0,-\i);
    % }
    % \foreach \i [count=\j from 0] in {2,5,6,8}{
    %   \coordinate (c\j) at (\i,3);
    %   \coordinate (d\j) at (-\i,-3);
    % }

    % \draw[knott] (d0) to[looseness=2,out=-90,in=-90] (d3);
    % \draw[knott] (b1) to[out=0,in=-90] (c0);

    % \draw[knott] (b0) to[out=0,in=-90] (c1);
    % \draw[knott] (a0) to[out=0,in=-90] (c2);

    % \draw[knott] (c2) to[out=90,in=0] (a2);
    % \draw[knott] (a2) to[out=180,in=90] (d3);
    % \draw[knott] (c1) to[out=90,in=0] (a1);

    % \draw[knott] (a0) to[out=180,in=90] (d1);
    % \draw[knott] (b0) to[out=180,in=90] (d2);

    % \draw[knott] (d2) to[out=-90,in=180] (b2);
    % \draw[knott] (b2) to[out=0,in=-90] (c3);
    % \draw[knott] (d1) to[out=-90,in=180] (b1);

    % \draw[knott] (a1) to[out=180,in=90] (d0);
    % \draw[knott] (c0) to[looseness=2,out=90,in=90] (c3);

    % \draw[ultra thick] (c1) -- (c2);
    % \draw[ultra thick] (d1) -- (d2);
  \end{tikzpicture}
  \]
  We get $r(K_n)\geq g(K_n)=ng(K_1)=2n$. The ribbon number is
  sub-additive under connect sum, so $r(K_n)\leq n r(K_1)=2n$;
  therefore, $r(K_n)=2n$.  We also get $b(K_n)\geq
  \rk(H_1(\Sigma_{K_n}))/2=\rk(\oplus^n H_1(\Sigma_{K_1}))/2=n$. The band
  number is also sub-additive under connect sum, so $b(K_n)\leq n
  b(K_1)=n$; therefore, $b(K_n)=n$.  Finally $d(K_n,U)=1$ since we
  have a sequence of knots $K_n,K_{n-1},\dots,K_0=U$, and a
  single-saddle ribbon concordance $K_{i+1}\to K_i$ for all $i$,
  obtained by connect summing $K_i$ with the single-saddle ribbon
  concordance $K_1\to U$.
\end{example}

It is unclear if $d(K,U)$ can be arbitrarily large (while staying
finite). In this paper, we will give an example of a knot with
$d(K,U)=2$ (Example~\ref{exam:d-is-2}), and indeed one with $d(K,U)>2$
(Example~\ref{exam:d-is-3}). It is reasonable to guess that the
techniques of this paper, but using knot Floer homology instead of
Khovanov homology, might produce examples of knots with larger values
of $d(K,U)$.

\section{Khovanov homology}\label{sec:khovanov}
Fix a ground ring $R$ and consider the $2$-dimensional Frobenius
algebra $V=R[T][X]/\{X^2=T\}$ over $R[T]$ with comultiplication $V\to
V\otimes_{R[T]} V$ given by
\[
1\mapsto 1\otimes X+X\otimes 1,\qquad X\mapsto X\otimes X+T1\otimes 1.
\]
and counit $V\to R[T]$ given by $1\mapsto 0,X\mapsto 1$. This produces
a Khovanov-style link homology theory~\cite{Kho-kh-categorification}
for any link $K$ by applying it to the Kauffman cube of resolutions of
its link diagram. The resulting theory is usually called the \emph{Lee
  perturbation of Khovanov homology}~\cite{Lee-kh-endomorphism}, and
we will denote it $\KhL(K)$. It is a bigraded homology theory over
$R[T]$ with $R$ in bigrading $(0,0)$ and $T$ in bigrading $(0,-4)$.

A dotted cobordism $F\from K_0\to K_1$ (in generic position) with
$\delta(F)$ dots induces a map 
\[
\KhL(F)\from \KhL(K_0)\to\KhL(K_1)
\]
of $R[T]$-modules of bigrading $(0,\chi(F)-2\delta(F))$
\cite{Kho-kh-cobordism, Bar-kh-tangle-cob, Jac-kh-cobordisms,
  LZ-kh-ribbon}, defined as follows. The movie presentation for $F$ is
a sequence of planar isotopy, Reidemeister moves, births, saddles,
deaths, and dot additions. Except dot addition, each of the other
moves induce a map on $\KhL$ using the Frobenius algebra $V$. The dot
addition map is defined slightly differently. We present a careful
definition below that avoids a sign issue.

An \emph{elementary dotted cobordism} from $K\to K$ is a product
cobordism decorated with a single dot. Consider (the projection of)
the dot on the oriented link diagram $\pi_R(K)$. Checkerboard color
the complement of the link diagram in $\RR^2$ so that the unbounded
region is colored white. If the arc in the link diagram containing the
dot is oriented as the boundary of a black region, define the sign of
the dot to be $(+1)$, otherwise, define it to be $(-1)$. Then define
the dotted cobordism map $\KhL(K)\to\KhL(K)$ to be the map merging a
small unknot labeled $X$ near the dot, times the sign of the dot.

It is well-known that two isotopic (rel boundary) undotted knot
cobordisms induce the same map $\KhL(K_0)\to\KhL(K_1)$, up to an
overall sign.\footnote{This sign issue can also be
  resolved~\cite{CMW-kh-functoriality}, but we will not need to.} We
have a similar variant for dotted cobordisms.

\begin{lemma}\label{lem:dotted-map-same}
  Assume $F,F'$ are dotted cobordisms (in generic position) with the
  same underlying surface and the same number of dots on each
  component, but differing only in the placement of the dots. Then
  they induce the same map on $\KhL$, including the sign.
\end{lemma}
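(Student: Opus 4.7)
The plan is to reduce the statement to \Lemma{dotted-movie-move} and then verify that each movie move listed there preserves the signed map induced on $\KhL$. It suffices to show that, for each of the local moves in that Lemma, the two orderings (or two placements) of the dotted operations yield the same map on $\KhL$.

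For the far commutation moves (1a)--(1e), the dot addition and the co-occurring event are supported on disjoint spatial regions of $K_t$. They therefore act on disjoint tensor factors of the Kauffman cube of resolutions, so the associated chain maps commute strictly. Moreover, the sign assigned to the dot depends only on the orientation of the arc carrying it and on the local checkerboard coloring near it, and both are unaffected by any distant operation. Hence the signed composite is invariant under these far commutations.

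The substantive case is move (2), where a dot is slid from one side of a crossing $c$ to the other. Here the unsigned chain-level dot map may change, because in each of the two resolutions of $c$ the dot can land on different circles of the resolved link; and the dot's sign can also change, since the four regions meeting at $c$ are alternately colored, so moving the dot across $c$ may flip which color sits on the prescribed side of the oriented arc. I would apply the standard ``dot-sliding'' chain homotopy for Khovanov-style theories---produced by the saddle cobordism at $c$ that interpolates its two resolutions---to equate the two unsigned chain maps on homology up to a predictable sign depending on the local data at $c$. The main obstacle, and the step that requires the most care, is a sign bookkeeping: one must check, case by case on the sign of the crossing and on whether the dot lies on the over- or under-strand at $c$, that this homotopy sign exactly cancels the flip in the checkerboard sign. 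This cancellation is precisely what the sign definition given just before this Lemma is designed to produce, so the verification reduces to a local computation in a neighborhood of $c$, completing the argument in combination with the far-commutation step.
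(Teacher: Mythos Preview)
Your proposal is correct and follows essentially the same route as the paper: reduce to \Lemma{dotted-movie-move}, dispose of far commutation by disjoint-support arguments, and for move (2) use the chain homotopy between the two $X$-merge maps across a crossing together with the flip of the checkerboard sign. The only difference is one of precision: the paper asserts (citing \cite{BLS-kh-pointed}) that the two unsigned $X$-merge maps are homotopic to \emph{exactly} the negative of each other, and the checkerboard sign \emph{always} flips when sliding along a strand through a crossing, so no case analysis on crossing sign or over/under is needed---the cancellation is uniform.
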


\begin{proof}
  We merely have to check that the map is unchanged under the movie
  moves listed in Lemma~\ref{lem:dotted-movie-move}. The first movie
  move (far commutation) is clear. For the second movie move (moving
  the dot past a crossing), we may check directly that on the Khovanov
  chain complex level, the map associated to merging a small unknot
  labeled $X$ to some strand is homotopic to \emph{negative} of the
  map associated to merging a small unknot labeled $X$ to the
  corresponding strand on the opposite side of a crossing,
  cf.~\cite{BLS-kh-pointed}. Therefore we have the same map on
  homology for dot addition on either side of a crossing.
\end{proof}

The main advantage of using dotted cobordisms is the famous
\emph{neck-cutting relation}. We will need it in the following two
forms.

\begin{lemma}\label{lem:neck-cutting-1}
  Assume the link diagram $\pi_R(K)$ for $K$ contains a small unknot
  $U$. Then, up to an overall sign, the identity map
  $\KhL(K)\to\KhL(K)$ is the sum of the following two maps,
  \begin{enumerate}[leftmargin=*]
  \item add a dot to $U$, perform a death on $U$, and perform a
    rebirth for $U$;
    \item perform a death on $U$, perform a rebirth for $U$, and add a
      dot to $U$.
  \end{enumerate}
  In terms of movies,
  \[
  \pm\KhL(\tikzcircle\stackrel{\Id}{\too}\tikzcircle)=
  \KhL(\tikzcircle\stackrel{\mathrm{dot}}{\too}\tikzcircle\stackrel{\mathrm{death}}{\too}\qquad\stackrel{\mathrm{birth}}{\too}\tikzcircle)
  +
  \KhL(\tikzcircle\stackrel{\mathrm{death}}{\too}\qquad\stackrel{\mathrm{birth}}{\too}\tikzcircle\stackrel{\mathrm{dot}}{\too}\tikzcircle).
  \]
\end{lemma}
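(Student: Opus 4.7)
The plan is to localize the identity to the tensor factor of $\KhL(K)$ corresponding to the small unknot $U$, and then verify a short algebraic identity in the Frobenius algebra $V$.

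Since $U$ is a split unknot component of $\pi_R(K)$ and $V$ is free over $R[T]$, the K\"unneth formula gives $\KhL(K)\cong\KhL(K\setminus U)\otimes_{R[T]}V$, and each of the three cobordisms in the statement acts as the identity on the first factor and only affects the $V$-factor. So the task reduces to verifying the corresponding identity of $R[T]$-linear endomorphisms of $V$.

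Next I would identify the relevant elementary moves on $V$ under the Lee TQFT: death of $U$ is the counit $\epsilon\colon V\to R[T]$ with $1\mapsto 0$ and $X\mapsto 1$; rebirth of $U$ is the unit $\eta\colon R[T]\to V$ with $1\mapsto 1$; and dot addition on $U$ is, up to the checkerboard sign of \Section{khovanov}, multiplication by $X$. A direct computation on the basis $\{1,X\}$ then gives
\[
\eta\circ\epsilon\circ(X\cdot-)\colon\ 1\mapsto 1,\ X\mapsto 0,\qquad
(X\cdot-)\circ\eta\circ\epsilon\colon\ 1\mapsto 0,\ X\mapsto X,
\]
so their sum is $\Id_V$. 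One uses here that $\epsilon$ is $R[T]$-linear with $\epsilon(1)=0$, so the $X^2=T$ produced by the first dot is annihilated by $\epsilon$.

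The only point to watch is the sign. In the two composites the dot sits on different diagrams---on the original $U$ in the first, on the reborn $U$ in the second---but by \Lemma{dotted-map-same} together with the movie moves of \Lemma{dotted-movie-move} we may move both dots to a common planar location on $U$, where they carry the same sign under the checkerboard convention. That common sign, together with the standard overall sign of Khovanov cobordism maps, becomes the $\pm$ in the statement.
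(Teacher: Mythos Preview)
Your proof is correct and follows essentially the same approach as the paper: both reduce to the chain-level identity $\eta\epsilon(X\cdot)+(X\cdot)\eta\epsilon=\Id_V$ on the $V$-tensor factor for the split unknot, and then handle the sign. Your sign argument is slightly over-engineered: you do not need \Lemma{dotted-map-same} or \Lemma{dotted-movie-move}, since the reborn $U$ sits at the same planar location with the same orientation as the original $U$, so the two dots automatically carry the same checkerboard sign---which is exactly what the paper observes.
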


\begin{proof}
  If the dot addition maps are given by merging small unknots labeled
  $X$, then it is easy to check that the above equation holds (without
  the sign) on the nose at the Khovanov chain complex level. However,
  the actual dot addition map has an extra sign given by the sign of
  the dot. But the unknot $U$ before death and the unknot $U$ after
  birth are oriented in the same way, so the two dots have the same
  sign, and consequently, the above equation holds up to an overall
  sign.
\end{proof}

\begin{lemma}\label{lem:neck-cutting-2}
  Assume $F\from K\to K$ is a cobordism obtained by performing an
  elementary saddle on the link diagram $\pi_R(K)$ for $K$, followed
  by performing the saddle in reverse. Then, up to an overall sign,
  the map $\KhL(K)\to\KhL(K)$ is the sum of the following two maps,
  \begin{enumerate}[leftmargin=*]
  \item add a dot to one of the two strands in $\pi_R(K)$ involved in
    the saddle;
  \item add a dot to the other strand in $\pi_R(K)$ involved in the
    saddle.
  \end{enumerate}
  In terms of movies,
  \[
  \pm\KhL(\,\,\tikzsaddle\stackrel{\mathrm{saddle}}{\too}\rotatebox[origin=c]{90}{$\tikzsaddle$}\stackrel{\mathrm{saddle}}{\too}\tikzsaddle\,\,)=
  \KhL(\,\,\tikzsaddle\xrightarrow[%
  \vcenter{\hbox{\begin{tikzpicture}[scale=0.3]\draw[knot] (0,0) to[out=45,in=-45] (0,1);\draw[knot] (1,0) to[out=135,in=-135] (1,1);\node[dot] at (0.2,0.5) {};\end{tikzpicture}}}%
  ]{\mathrm{dot}}\tikzsaddle\,\,)
  +
  \KhL(\,\,\tikzsaddle\xrightarrow[%
  \vcenter{\hbox{\begin{tikzpicture}[scale=0.3]\draw[knot] (0,0) to[out=45,in=-45] (0,1);\draw[knot] (1,0) to[out=135,in=-135] (1,1);\node[dot] at (1-0.2,0.5) {};\end{tikzpicture}}}%
  ]{\mathrm{dot}}\tikzsaddle\,\,).
  \]
\end{lemma}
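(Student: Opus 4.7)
Plan: The plan is to compute both sides of the claimed equation at the chain level, using the Frobenius algebra structure $V=R[T][X]/\{X^2=T\}$ that defines $\KhL$, and then descend to homology.

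I would begin by observing that both the saddle and its reverse are local operations supported in a small neighborhood of the saddle region; outside this region both chain-level maps act as the identity. At each vertex of the Kauffman cube of resolutions, the two strand arcs participating in the saddle lie either in the same resolution circle or in two distinct resolution circles, so it suffices to verify the formula locally in these two cases on the corresponding tensor factors of $V$.

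In the same-circle case, the saddle induces the comultiplication $\Delta\colon V\to V\otimes_{R[T]}V$ and its reverse induces the multiplication $m\colon V\otimes_{R[T]}V\to V$. A direct computation gives $(m\circ\Delta)(1)=m(1\otimes X+X\otimes 1)=2X$ and $(m\circ\Delta)(X)=m(X\otimes X+T\cdot 1\otimes 1)=2T=2X\cdot X$, so $m\circ\Delta$ is multiplication by $2X$ on $V$, which equals $X+X$, the sum of the two dot-addition operators (each acting as multiplication by $X$ on the single $V$-factor). In the different-circle case, the saddle induces $m$ and its reverse induces $\Delta$; checking $\Delta\circ m$ on the basis $\{1\otimes 1,1\otimes X,X\otimes 1,X\otimes X\}$ of $V\otimes V$ shows it coincides exactly with the operator $X\otimes\Id+\Id\otimes X$, which in turn is the sum of the two dot-addition maps (one on each circle).

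The main subtlety will be the sign of each dot-addition map, which depends on the orientation of the strand carrying the dot via the checkerboard convention from Section~\ref{sec:khovanov}. The hard part is verifying that the two dot signs are synchronized, so that their sum is unambiguous up to a single overall sign; I would derive this from the orientability of the saddle cobordism connecting the two strands, in the same spirit as the sign argument in the proof of \Lemma{neck-cutting-1}. Once this is in place, the chain-level identity passes to the stated equality in $\KhL$.
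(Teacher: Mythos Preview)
Your proposal is correct and follows essentially the same approach as the paper: verify the identity at the chain level (where it reduces to the Frobenius algebra identities $m\circ\Delta=2X$ and $\Delta\circ m=X\otimes\Id+\Id\otimes X$), and then observe that because the saddle is oriented, the checkerboard signs of the two dots agree, so the equation holds up to a single overall sign. The paper's proof is simply a terse version of what you wrote, omitting the explicit basis computations you carried out.
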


\begin{proof}
  The proof is very similar to the previous proof. If the dot addition
  maps are given by merging small unknots labeled $X$, then the
  equation holds (without the sign) at the Khovanov chain complex
  level. However, since the saddle is an oriented saddle, the two dots
  on the two strands have the same sign, and consequently, the above
  equation holds up to an overall sign.
\end{proof}

\section{$X$-action on Khovanov homology}\label{sec:x-action}
If we fix a component of $K$, then the map $\KhL(K)\to\KhL(K)$
associated to the elementary dotted cobordism $K\to K$ that has a
single dot on the chosen component is denoted $X$ (since it comes from
merging an unknot labeled $X$), and is often called the
\emph{$X$-action} on $\KhL(K)$. It is clear from the Frobenius algebra
$V$ that $X^2=T$.  This makes $\KhL(K)$ a module over
$R[T,X]/\{X^2=T\}=R[X]$ (although the module structure depends on
chosen link component).

Khovanov homology of connect sums has a nice expression using the
$X$-actions.
\begin{lemma}\label{lem:khovanov-connect-sum}
  Let $K,K'$ be links with chosen components, and let $K\# K'$ be the
  link obtained by connect summing the chosen components. Then 
  \[
  \KhL(K\# K')\cong\Sigma^{0,-1}\Tor_{R[X]}(\KhL(K),\KhL(K'))
  \]
  as bigraded $R[X]$-modules, with the $X$-action on the right-hand
  side induced from the $X$-action on either $\KhL(K)$ or
  $\KhL(K')$. Here $\Sigma^{a,b}$ denotes an upward bigrading shift by
  $(a,b)$, that is, tensoring with a single $R$ in bigrading $(a,b)$.
\end{lemma}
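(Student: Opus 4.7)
The strategy is to realize $\KhCxL(K\# K')$ at the chain level as a tensor product $\KhCxL(K) \otimes_V \KhCxL(K')$ over $V = R[X]$, and then to observe that both factors are complexes of free $V$-modules so that this tensor product computes the derived functor $\Tor_{R[X]}$.

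First I would identify $\KhCxL(K \sqcup K') \cong \KhCxL(K) \otimes_{R[T]} \KhCxL(K')$ as bigraded chain complexes. This follows because the cube of resolutions for a split diagram is the product of the two individual cubes, the Frobenius algebra $V^{\otimes(n+n')}$ at each vertex factors as $V^{\otimes n} \otimes_{R[T]} V^{\otimes n'}$, and the Khovanov differentials split across the two factors (since each crossing belongs to a single factor). The resulting complex carries two commuting $X$-actions, one from each chosen component. Next, $K\# K'$ is obtained from $K \sqcup K'$ by a single oriented merge saddle at the connect sum point, which introduces no new crossings. Consequently, the cube for $K \# K'$ has the same shape as for $K \sqcup K'$, except that at each vertex the two chosen circles are replaced by a single merged circle, and the Frobenius factor $V \otimes_{R[T]} V$ on those circles is replaced by $V$ via the multiplication map. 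Since multiplication $V \otimes_{R[T]} V \to V$ is precisely the quotient map realizing the tensor product over $V$ using the two $X$-actions, this yields a chain-level isomorphism $\KhCxL(K\# K') \cong \KhCxL(K) \otimes_V \KhCxL(K')$, up to the overall bigrading shift $(0, \chi(F) - 2\delta(F)) = (0, -1)$ determined by the saddle cobordism map.

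To conclude, I would observe that $\KhCxL(K)$ is a bounded complex of projective $V$-modules, hence K-projective: at each vertex, isolating the chosen component writes $V^{\otimes n}$ as $V \otimes_{R[T]} V^{\otimes(n-1)}$, which is free as a $V$-module. The same holds for $\KhCxL(K')$. Therefore $\KhCxL(K) \otimes_V \KhCxL(K')$ computes the derived tensor product, and passing to homology yields $\Tor_{R[X]}(\KhL(K), \KhL(K'))$ as bigraded $R[X]$-modules; combined with the $(0,-1)$ shift this gives the lemma. The main obstacle I anticipate is carefully verifying that the chain-level identification is compatible with the Khovanov differentials across the whole cube of resolutions, not merely pointwise at each vertex; this compatibility is essentially automatic because the differentials away from the connect sum region act on tensor factors unrelated to the merged circles, but it deserves a careful cube-of-resolutions check, together with the bookkeeping for the $(0,-1)$ shift.
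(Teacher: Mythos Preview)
Your proposal is correct and follows essentially the same approach as the paper: both establish a chain-level isomorphism $\KhCxL(K\#K')\cong\Sigma^{0,-1}\KhCxL(K)\otimes_{R[X]}\KhCxL(K')$ via the merge saddle, observe that the chain groups are free over $R[X]$ (by isolating the chosen circle at each resolution), and pass to homology. The paper dispatches the differential-compatibility obstacle you flag by packaging the identification as the saddle cobordism map $\KhCxL(K\amalg K')\to\Sigma^{0,1}\KhCxL(K\#K')$, which is a chain map by construction, and then checking resolution-by-resolution (i.e., for planar unlinks) that it descends to an isomorphism from the tensor product over $R[X]$---precisely your vertex-wise argument.
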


\begin{proof}
  The argument entirely follows Khovanov's argument for his original
  invariant (which is the specialization $X^2=T=0$), so we skip some
  details.  Consider the following link diagrams for $K$, $K'$, and
  $K\# K'$, so that the induced diagram for $K\amalg K'$ differs from
  the diagram of $K\# K'$ locally by an elementary saddle.
  \[
  \vcenter{\hbox{
  \begin{tikzpicture}
    \draw[knot] (-30:0.5) to[looseness=3,out=-30,in=30] (30:0.5);
    \node[circle,fill=white,draw,inner sep=0,outer sep=0,minimum width=1cm] at (0,0) {$K$};
  \end{tikzpicture}}}
  \qquad\qquad
  \vcenter{\hbox{
  \begin{tikzpicture}
    \draw[knot] (150:0.5) to[looseness=3,out=150,in=-150] (-150:0.5);
    \node[circle,fill=white,draw,inner sep=0,outer sep=0,minimum width=1cm] at (0,0) {$K'$};
  \end{tikzpicture}}}
  \qquad\qquad
  \vcenter{\hbox{
  \begin{tikzpicture}
    \begin{scope}[xshift=-2cm]
      \node[circle,fill=white,draw,inner sep=0,outer sep=0,minimum width=1cm] at (0,0) {$K$};
      \coordinate (t) at (30:0.5);
      \coordinate (b) at (-30:0.5);
    \end{scope}
    \node[circle,fill=white,draw,inner sep=0,outer sep=0,minimum width=1cm] at (0,0) {$K'$};
    \draw[knot] (t) to[out=30,in=150] (150:0.5);
    \draw[knot] (b) to[out=-30,in=-150] (-150:0.5);
  \end{tikzpicture}}}
  \]
  Let $\KhCxL$ be the Khovanov chain complexes associated to these
  diagrams. They become modules over $R[X]$ by the $X$-action at the
  strands that are shown in the above diagram. (For $K\#K'$ either
  strand works.) By construction, these complexes are free over
  $R[T]$, but indeed, they are free over $R[X]$ as well. Therefore, it
  is enough to construct an isomorphism of chain complexes over
  $R[X]$,
  \[
  \KhCxL(K\# K')\cong\Sigma^{0,-1}\KhCxL(K)\otimes_{R[X]}\KhCxL(K').
  \]

  Consider the saddle map
  \[
  \KhCxL(K)\otimes_{R[T]}\KhCxL(K')\cong\KhCxL(K\amalg K')
  \to\Sigma^{0,1}\KhCxL(K\# K'),
  \]
  and it is easy to check that it factors through
  $\KhCxL(K)\otimes_{R[X]}\KhCxL(K')$. So all that remains is to check
  that this $R[X]$-module chain map 
  \[
  \KhCxL(K)\otimes_{R[X]}\KhCxL(K')\to\Sigma^{0,1}\KhCxL(K\# K')
  \]
  is an isomorphism on the chain groups.

  The chain groups $\KhCxL$ decompose as direct sums of chain groups
  of various resolutions of the link diagrams, so it is enough to
  check that the above map is an isomorphism at each resolution of $K$
  and $K'$---that is, it is enough to check the case when $K$ and $K'$
  are planar unlinks, which is trivial to check.
\end{proof}

If $K$ is a knot, and $R$ is a field $\FF$ with $2\neq 0$, then the
module $\KhL(K)$ over $\FF[X]$ takes a particularly simple form. It
decomposes (non-canonically) as $\Sigma^{0,s(K)+1}\FF[X]\oplus
\XTorsion(K)$, where $s(K)$ is Rasmussen's $s$-invariant, and
$\XTorsion(K)$ is the (canonical) subgroup of $\KhL(K)$ consisting of
the $X$-torsion elements,
\[
\XTorsion(K)=\{a\in\KhL(K)\mid \exists n, X^na=0\},
\]
which we will call the \emph{extortion group} of $K$.

The smallest $n$ such that $X^n\XTorsion(K)=0$ is called the
\emph{extortion order}, and denoted $\XOrder(K)$. This was used
earlier in~\cite{AD-kh-unknotting} to provide a lower bound on the
unknotting number. The extortion order $\XOrder(K)$ is related to the
Lee spectral sequence (coming from the filtered chain complex for
$\KhL(K)$ with filtration given by powers of $T$) as follows. If the
Lee spectral sequence collapses at the $E_k$ page, then
$\XOrder(K)\in\{2k-3,2k-2\}$.

The only knot with $\XOrder(K)=0$ (that is, $\XTorsion(K)=0$) is the
unknot~\cite{KM-kh-unknot}. All other Kh-thin knots have
$\XOrder(K)=1$; $8_{19}$ is the first knot with $\XOrder(K)=2$. Since
the Lee spectral sequence collapses at the $E_2$ page for small knots,
it is hard to find examples of knots with $\XOrder(K)>2$; the first
example of a knot with $\XOrder(K)>2$ was constructed
in~\cite{MM-kh-knight}.

The extortion order can be computed from the Mathematica package
KnotTheory~\cite{KAT-kh-knotatlas} using the function UniversalKh, the
standard reference for which seems to be `Scott's
slides'~\cite{Sco-kh-slides}. UniversalKh works over $\QQ$ and returns a free
resolution of $\KhL(K)$ over $\QQ[X]$. Each term $t^aq^b\mathrm{KhE}$
contributes a tower $\QQ[X]\langle p\rangle$ with the generator $p$ in
bigrading $(a,b)$, and each term $t^aq^b\mathrm{KhC[n]}$ contributes a
two-step complex $\QQ[X]\langle p,q\rangle$ with generators $p,q$ in
bigradings $(a-1,b-2n),(a,b)$ and differential $p\mapsto X^n
q$. Therefore, the extortion group $\XTorsion(K)$ over $\QQ$ is
isomorphic to the homology of complexes coming from the
$\mathrm{KhC}[n]$ terms and the extortion order $\XOrder(K)$ over
$\QQ$ is the largest $n$ so that $\mathrm{KhC}[n]$ appears.

The extortion groups and extortion orders behave nicely under connect
sums.
\begin{lemma}\label{lem:xo-connect-sum}
  Consider knots $K,K'$ and their connect sum $K\# K'$. Then over any
  field $\FF$ with $2\neq 0$,
  \[
  \XTorsion(K\# K')\cong\Sigma^{0,s(K')}
  \XTorsion(K)\oplus\Sigma^{0,s(K)}\XTorsion(K')\oplus
  \Sigma^{0,-1}\Tor_{\FF[X]}(\XTorsion(K),\XTorsion(K')),
  \]
  and
  \[
  \XOrder(K\# K')=\max\{\XOrder(K),\XOrder(K')\}.
  \]
\end{lemma}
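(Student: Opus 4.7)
The plan is to combine \Lemma{khovanov-connect-sum} with the $\FF[X]$-module splitting $\KhL(K)\cong\Sigma^{0,s(K)+1}\FF[X]\oplus\XTorsion(K)$ (and the analogous one for $K'$) and expand. Reading $\Tor_{\FF[X]}(\cdot,\cdot)$ as the total derived tensor (the sum of $\Tor_0$ and $\Tor_1$; higher $\Tor_i$ vanish since $\FF[X]$ is a PID), I would substitute the splittings into
\[
\KhL(K\#K')\cong\Sigma^{0,-1}\Tor_{\FF[X]}(\KhL(K),\KhL(K'))
\]
and use bilinearity of $\Tor$ together with the vanishing $\Tor_1(\FF[X],-)=0$ to kill every mixed $\Tor_1$ contribution. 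What remains is four summands: a free tower $\Sigma^{0,s(K)+s(K')+2}\FF[X]$ from the free-on-free part, two mixed torsion summands $\Sigma^{0,s(K')+1}\XTorsion(K)$ and $\Sigma^{0,s(K)+1}\XTorsion(K')$, and the derived tensor $\Tor_{\FF[X]}(\XTorsion(K),\XTorsion(K'))$.

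Next, I would apply the overall $\Sigma^{0,-1}$ and match pieces with the known splitting of $\KhL(K\#K')$. Since $s$ is additive under connect sum, the free tower becomes $\Sigma^{0,s(K\#K')+1}\FF[X]$, which is precisely the free part of $\KhL(K\#K')$. The remaining three summands are all $X$-torsion (tensor products and $\Tor$'s of torsion modules are torsion over a PID), so by uniqueness of the splitting into free tower and torsion complement they constitute $\XTorsion(K\#K')$. This gives the first isomorphism.

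For the extortion order, I would observe that $\XOrder$ is invariant under bigrading shifts, so the two mixed summands contribute extortion orders $\XOrder(K)$ and $\XOrder(K')$ respectively. For the derived-tensor summand, after decomposing each of $\XTorsion(K)$ and $\XTorsion(K')$ into cyclic modules of the form $\FF[X]/(X^n)$, a direct computation using the resolution $0\to\FF[X]\xrightarrow{X^a}\FF[X]\to\FF[X]/(X^a)\to 0$ shows that both $\Tor_0(\FF[X]/(X^a),\FF[X]/(X^b))$ and $\Tor_1(\FF[X]/(X^a),\FF[X]/(X^b))$ are cyclic of order $X^{\min(a,b)}$. Hence the $\Tor$ summand has extortion order at most $\min(\XOrder(K),\XOrder(K'))$, which is always dominated by the maximum realized on one of the mixed summands, yielding the second formula.

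The main bookkeeping task will be tracking the internal $q$-grading shifts through the free resolutions used to compute $\Tor_1$: since $X$ has $q$-degree $-2$, the resolution of $\FF[X]/(X^n)$ carries an internal shift by $-2n$, and one must check that these shifts are consistent across all four expanded summands. Beyond this, no Khovanov-specific input is needed beyond \Lemma{khovanov-connect-sum}, the cyclic decomposition of finitely generated torsion $\FF[X]$-modules, and the well-known additivity $s(K\#K')=s(K)+s(K')$.
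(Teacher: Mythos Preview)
Your proposal is correct and matches the paper's proof: expand \Lemma{khovanov-connect-sum} via the splitting $\KhL\cong\Sigma^{0,s+1}\FF[X]\oplus\XTorsion$, then bound the extortion order of the $\Tor$-summand by $\min\{\XOrder(K),\XOrder(K')\}$ using the cyclic decomposition (the paper phrases this last step as a change of basis on the tensor product of two-step resolutions rather than computing $\Tor_0$ and $\Tor_1$ separately, but the content is identical). One minor simplification: you do not actually need additivity of $s$, since $\XTorsion(K\#K')$ is canonically the torsion submodule and the three non-free summands in your expansion are visibly torsion regardless of how the free tower is graded.
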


\begin{proof}
  The first statement is immediate from
  Lemma~\ref{lem:khovanov-connect-sum} and the isomorphism
  \[
  \KhL(L)\cong \Sigma^{0,s(L)+1}\FF[X]\oplus \XTorsion(L)
  \] for all knots $L$.

  For the second statement, we immediately get 
  \[
  \XOrder(K\# K')\geq \max\{\XOrder(K),\XOrder(K')\}
  \] 
  from the first two summands in the decomposition of $\XTorsion(K\#
  K')$. So it is enough to prove that the extortion order of the
  summand $\Tor_{\FF[X]}(\XTorsion(K),\XTorsion(K'))$ equals the
  minimum of the extortion order of $\XTorsion(K)$ and
  $\XTorsion(K')$.

  Consider free resolutions $\wt{\XTorsion}(K)$ and
  $\wt{\XTorsion}(K')$ of the extortion groups over $\FF[X]$. By the
  classification of finitely generated modules over PID's, they
  decompose into a direct sum of 2-step complexes $\FF[X]\langle
  p,q\rangle$, with the differential given by $p\mapsto \alpha(X)q$,
  where $\alpha(X)$ is some power of some irreducible homogeneous
  polynomial in $X$. Since $X$ has non-zero bigrading, the only
  possibilities are $\alpha(X)=X^n$. Each such summand contributes
  $\FF[X]\langle q\rangle/\{X^nq=0\}$ in homology, so the extortion
  orders are the maximum $n$'s that appear in such a decomposition.

  If $n\geq m$, then by a simple change of basis, the tensor product
  of the 2-step complexes $\FF[X]\stackrel{X^n}{\too}\FF[X]$ and
  $\FF[X]\stackrel{X^m}{\too}\FF[X]$ decomposes as
  $\big(\FF[X]\stackrel{X^m}{\too}\FF[X]\big)\oplus
  \big(\FF[X]\stackrel{X^m}{\too}\FF[X]\big)$, and hence, the
  extortion order of the summand
  $\Tor_{\FF[X]}(\XTorsion(K),\XTorsion(K'))$ equals
  $\min\{\XTorsion(K),\XTorsion(K')\}$.
\end{proof}

\section{Main theorem}\label{sec:main}
This section is devoted to the proof of the main theorems from
Section~\ref{sec:introduction}.

\begin{proof}[Proof of Theorem~\ref{thm:main}]
  Since the ribbon distance is defined using a sequence of ribbon
  concordances, it is enough to do the case when there is a ribbon
  concordance $K\stackrel{F}{\too} K'$ with at most $d$ saddles. After
  isotopy, we assume the movie of the cobordism $F$ has the following
  form.
  \begin{enumerate}[leftmargin=*]
  \item First we perform some Reidemeister moves and planar isotopy on
    $K$. Since we are free to choose the link diagram for $K$, we
    actually do not need this move.
  \item Then we perform $d$ elementary (planar) saddles, one at a time.
  \item Then we perform further Reidemeister moves and planar isotopy.
  \item Then we perform $d$ elementary (planar) deaths, again one at a
    time.
  \item Then we again perform some Reidemeister moves and planar
    isotopy to end at $K'$. Once again, since we are free to choose
    the link diagram for $K'$, we do not need this move.
  \end{enumerate}
  So the ribbon concordance $K\stackrel{F}{\too} K'$ decomposes as
  $K\stackrel{F_2}{\too} \wt{K}\stackrel{F_3}{\too} K'\amalg
  U^d\stackrel{F_4}{\too} K'$, where the piece $F_i$ comes from
  Item-($i$) above, and $U^d$ denotes $d$-component planar unlink.

  Let $K'\stackrel{\ol{F}}{\too} K$ be the cobordism viewed in reverse
  (which decomposes as $K'\stackrel{\ol{F}_4}{\too}K'\amalg U^d
  \stackrel{\ol{F}_3}{\too} \wt{K}\stackrel{\ol{F}_2}{\too} K$). Let
  $K\stackrel{W}{\too} K$ be the cobordism
  \[
  K\stackrel{F_2}{\too} \wt{K}\stackrel{F_3}{\too} K'\amalg
  U^d\stackrel{\ol{F}_3}{\too} \wt{K}\stackrel{\ol{F}_2}{\too} K.
  \]
  We will prove Theorem~\ref{thm:main} by computing the image of the
  map $\KhL(W)$ in two different ways, corresponding to the two sides
  of the equation in the statement of the theorem.

  \begin{itemize}[leftmargin=*]
  \item\textbf{Method 1.} The cobordism $\wt{K}\stackrel{F_3}{\too}
    K'\amalg U^d\stackrel{\ol{F}_3}{\too} \wt{K}$ is isotopic (rel
    boundary) to the identity cobordism $\wt{K}\to\wt{K}$ since the
    cobordism $F_3$ corresponds to a link isotopy in $\RR^3$, and
    $\ol{F}_3$ is the same isotopy performed in reverse. Therefore,
    the image of $\KhL(W)$ is same as the image of the map associated
    to the cobordism $K\stackrel{F_2}{\too}
    \wt{K}\stackrel{\ol{F}_2}{\too} K$.

    This cobordism performs $d$ planar saddles, and then performs them
    in reverse. So repeated applications of
    Lemma~\ref{lem:neck-cutting-2} tells us that the map
    $\KhL(\ol{F}_2)\circ\KhL(F_2)$ associated to this cobordism, up to
    an overall sign, is $2^d$ times the map associated to the dotted
    cobordism $K\stackrel{P}{\too} K$, where $P$ is the product
    cobordism decorated with $d$ dots. (Note, since $P$ is connected,
    by Lemma~\ref{lem:dotted-map-same}, the map $\KhL(P)$ is
    independent of the placement of the $d$ dots on $P$.) By
    definition, the image of $\KhL(P)$ is $X^d\KhL(K)$; therefore, the
    image of the original cobordism map is $(2X)^d\KhL(K)$.

    Schematically (with $d=1$):
    \[
    \vcenter{\hbox{
        \begin{tikzpicture}[rotate=90,yscale=0.3]
          \draw[knot] (0,6) circle (1);
          \node[anchor=north] at (-1,6) {$K$};
          
          \draw[knot] (-1,3) arc (-180:0:0.5);
          \draw[knot] (0.2,3) arc (-180:0:0.4);
          \node[anchor=north] at (-1,3) {$\wt{K}$};
          
          \draw[knot] (-1.5,0) arc (-180:0:0.8);
          \draw[knot] (0.8,0) arc (-180:0:0.3);
          \node[anchor=north] at (-1.5,0) {$K'$};
          \node[anchor=north] at (0.8,0) {$U$};
          
          \draw[knot] (-1,-3) arc (-180:0:0.5);
          \draw[knot] (0.2,-3) arc (-180:0:0.4);
          \node[anchor=north] at (-1,-3) {$\wt{K}$};
          
          \draw[knot] (-1,-6) arc (-180:0:1);
          \node[anchor=north] at (-1,-6) {$K$};

          \draw (-1,6) -- (-1,3);
          \draw (0,3) to[looseness=10,out=90,in=90] (0.2,3);
          \draw (1,6) -- (1,3);
          \node at (-0.5,4) {$F_2$};
          
          \draw (-1,3) to[looseness=1,out=-90,in=90] (-1.5,0);
          \draw (0,3) to[looseness=1,out=-90,in=90] (0.1,0);
          \draw (0.2,3) to[looseness=1,out=-90,in=90] (0.8,0);
          \draw (1,3) to[looseness=1,out=-90,in=90] (1.4,0);
          \node at (-0.5,1) {$F_3$};
          
          \draw (-1,-3) to[looseness=1,out=90,in=-90] (-1.5,0);
          \draw (0,-3) to[looseness=1,out=90,in=-90] (0.1,0);
          \draw (0.2,-3) to[looseness=1,out=90,in=-90] (0.8,0);
          \draw (1,-3) to[looseness=1,out=90,in=-90] (1.4,0);
          \node at (-0.5,-2) {$\ol{F}_3$};
          
          \draw (-1,-6) -- (-1,-3);
          \draw (0,-3) to[looseness=10,out=-90,in=-90] (0.2,-3);
          \draw (1,-6) -- (1,-3);
          \node at (-0.5,-5) {$\ol{F}_2$};
        \end{tikzpicture}}}
    =(-1)^\alpha
    \vcenter{\hbox{
        \begin{tikzpicture}[rotate=90,yscale=0.3]
          \draw[knot] (0,6) circle (1);
          \node[anchor=north] at (-1,6) {$K$};
          
          \draw[knot] (-1,3) arc (-180:0:0.5);
          \draw[knot] (0.2,3) arc (-180:0:0.4);
          \node[anchor=north] at (-1,3) {$\wt{K}$};
          
          \draw[knot] (-1,0) arc (-180:0:1);
          \node[anchor=north] at (-1,0) {$K$};

          \draw (-1,6) -- (-1,3);
          \draw (0,3) to[looseness=10,out=90,in=90] (0.2,3);
          \draw (1,6) -- (1,3);
          \node at (-0.5,4) {$F_2$};
          
          \draw (-1,0) -- (-1,3);
          \draw (0,3) to[looseness=10,out=-90,in=-90] (0.2,3);
          \draw (1,0) -- (1,3);
          \node at (-0.5,1) {$\ol{F}_2$};
        \end{tikzpicture}}}
    =2(-1)^\beta
    \vcenter{\hbox{
        \begin{tikzpicture}[rotate=90,yscale=0.3]
          \draw[knot] (0,3) circle (1);
          \node[anchor=north] at (-1,3) {$K$};
          
          \draw[knot] (-1,0) arc (-180:0:1);
          \node[anchor=north] at (-1,0) {$K$};

          \draw (-1,0) -- (-1,3);
          \draw (1,0) -- (1,3);
          \node at (-0.5,1) {$P$};
          \node[dot] at (0,0.6) {};
        \end{tikzpicture}}}
    \]
  \item\textbf{Method 2.} Insert the identity cobordism $K'\amalg
    U^d\to K'\amalg U^d$ into $W$ to get an isotopic cobordism
    \[
    K\stackrel{F_2}{\too} \wt{K}\stackrel{F_3}{\too} K'\amalg
    U^d\stackrel{\Id}{\too}K'\amalg U^d\stackrel{\ol{F}_3}{\too}
    \wt{K}\stackrel{\ol{F}_2}{\too} K.
    \]
    The cobordism $K'\amalg U^d\stackrel{\Id}{\too}K'\amalg U^d$ has
    $d$ necks coming from $U^d$, so repeated applications of
    Lemma~\ref{lem:neck-cutting-1} tells us that the map
    $\Id\from\KhL(K'\amalg U^d)\to\KhL(K'\amalg U^d)$, up to an
    overall sign, is the sum of $2^d$ maps associated to the following
    $2^d$ dotted cobordisms with $d$ dots: Each has the same
    underlying surface $K'\amalg U^d\stackrel{F_4}{\too}K'
    \stackrel{\ol{F}_4}{\too}K'\amalg U^d$ which has $d$ death-birth
    pairs; and the $2^d$ dotted cobordisms are obtained by
    distributing $d$ dots in $2^d$ different ways so that each
    death-birth pair has exactly one dot.

    The underlying composed cobordism
    \[
    \big(K\stackrel{F_2}{\too} \wt{K}\stackrel{F_3}{\too} K'\amalg
    U^d\stackrel{F_4}{\too}K'\stackrel{\ol{F}_4}{\too}K'\amalg
    U^d\stackrel{\ol{F}_3}{\too} \wt{K}\stackrel{\ol{F}_2}{\too} K\big)=
    K\stackrel{F}{\too} K'\stackrel{\ol{F}}{\too}K
    \]
    is connected, so by Lemma~\ref{lem:dotted-map-same}, the $2^d$
    dotted cobordism maps all induce the same map, which is the map
    $\KhL(\ol{F})\circ\KhL(Q)\circ\KhL(F)$ corresponding to the dotted
    cobordism $K\stackrel{F}{\too}
    K'\stackrel{Q}{\too}K'\stackrel{\ol{F}}{\too}K$, where
    $K'\stackrel{Q}{\too}K'$ is the product cobordism decorated with
    $d$ dots.

    Levine and Zemke has shown~\cite{LZ-kh-ribbon} that the map
    $\KhL(F)\circ\KhL(\ol{F})\from \KhL(K')\to\KhL(K')$ is $\pm
    \Id$,\footnote{Actually, they proved it for Khovanov's
      specialization $X^2=T=0$, but the proof works in this more
      general case.}  and therefore, the map $\KhL(F)$ is surjective
    and the map $\KhL(\ol{F})$ is injective. Consequently, the image
    of the map $\KhL(\ol{F})\circ\KhL(Q)\circ\KhL(F)$ is isomorphic to
    the image of $\KhL(Q)$, which is $X^d\KhL(K')$. Therefore, the
    image of the original cobordism map is isomorphic to
    $(2X)^d\KhL(K')$.

    Schematically:
    \[
    \vcenter{\hbox{
        \begin{tikzpicture}[rotate=90,yscale=0.3]
          \draw[knot] (0,9) circle (1);
          \node[anchor=north] at (-1,9) {$K$};
          
          \draw[knot] (-1,6) arc (-180:0:0.5);
          \draw[knot] (0.2,6) arc (-180:0:0.4);
          \node[anchor=north] at (-1,6) {$\wt{K}$};
          
          \draw[knot] (-1.5,3) arc (-180:0:0.8);
          \draw[knot] (0.8,3) arc (-180:0:0.3);
          \node[anchor=north] at (-1.5,3) {$K'$};
          \node[anchor=north] at (0.8,3) {$U$};

          \draw[knot] (-1.5,0) arc (-180:0:0.8);
          \draw[knot] (0.8,0) arc (-180:0:0.3);
          \node[anchor=north] at (-1.5,0) {$K'$};
          \node[anchor=north] at (0.8,0) {$U$};
          
          \draw[knot] (-1,-3) arc (-180:0:0.5);
          \draw[knot] (0.2,-3) arc (-180:0:0.4);
          \node[anchor=north] at (-1,-3) {$\wt{K}$};
          
          \draw[knot] (-1,-6) arc (-180:0:1);
          \node[anchor=north] at (-1,-6) {$K$};

          \draw (-1,9) -- (-1,6);
          \draw (0,6) to[looseness=10,out=90,in=90] (0.2,6);
          \draw (1,9) -- (1,6);
          \node at (-0.5,7) {$F_2$};
          
          \draw (-1,6) to[looseness=1,out=-90,in=90] (-1.5,3);
          \draw (0,6) to[looseness=1,out=-90,in=90] (0.1,3);
          \draw (0.2,6) to[looseness=1,out=-90,in=90] (0.8,3);
          \draw (1,6) to[looseness=1,out=-90,in=90] (1.4,3);
          \node at (-0.5,4) {$F_3$};
          
          \draw (-1.5,3) -- (-1.5,0);
          \draw (0.1,3) -- (0.1,0);
          \draw (0.8,3) -- (0.8,0);
          \draw (1.4,3) -- (1.4,0);
          \node at (-0.5,1) {$\Id$};

          \draw (-1,-3) to[looseness=1,out=90,in=-90] (-1.5,0);
          \draw (0,-3) to[looseness=1,out=90,in=-90] (0.1,0);
          \draw (0.2,-3) to[looseness=1,out=90,in=-90] (0.8,0);
          \draw (1,-3) to[looseness=1,out=90,in=-90] (1.4,0);
          \node at (-0.5,-2) {$\ol{F}_3$};
          
          \draw (-1,-6) -- (-1,-3);
          \draw (0,-3) to[looseness=10,out=-90,in=-90] (0.2,-3);
          \draw (1,-6) -- (1,-3);
          \node at (-0.5,-5) {$\ol{F}_2$};
        \end{tikzpicture}}}
    =2(-1)^\gamma
    \vcenter{\hbox{
        \begin{tikzpicture}[rotate=90,yscale=0.3]
          \draw[knot] (0,15) circle (1);
          \node[anchor=north] at (-1,15) {$K$};
          
          \draw[knot] (-1,12) arc (-180:0:0.5);
          \draw[knot] (0.2,12) arc (-180:0:0.4);
          \node[anchor=north] at (-1,12) {$\wt{K}$};
          
          \draw[knot] (-1.5,9) arc (-180:0:0.8);
          \draw[knot] (0.8,9) arc (-180:0:0.3);
          \node[anchor=north] at (-1.5,9) {$K'$};
          \node[anchor=north] at (0.8,9) {$U$};

          \draw[knot] (-1.5,6) arc (-180:0:0.8);
          \node[anchor=north] at (-1.5,6) {$K'$};

          \draw[knot] (-1.5,3) arc (-180:0:0.8);
          \node[anchor=north] at (-1.5,3) {$K'$};

          \draw[knot] (-1.5,0) arc (-180:0:0.8);
          \draw[knot] (0.8,0) arc (-180:0:0.3);
          \node[anchor=north] at (-1.5,0) {$K'$};
          \node[anchor=north] at (0.8,0) {$U$};
          
          \draw[knot] (-1,-3) arc (-180:0:0.5);
          \draw[knot] (0.2,-3) arc (-180:0:0.4);
          \node[anchor=north] at (-1,-3) {$\wt{K}$};
          
          \draw[knot] (-1,-6) arc (-180:0:1);
          \node[anchor=north] at (-1,-6) {$K$};

          \draw (-1,15) -- (-1,12);
          \draw (0,12) to[looseness=10,out=90,in=90] (0.2,12);
          \draw (1,15) -- (1,12);
          \node at (-0.5,13) {$F_2$};
          
          \draw (-1,12) to[looseness=1,out=-90,in=90] (-1.5,9);
          \draw (0,12) to[looseness=1,out=-90,in=90] (0.1,9);
          \draw (0.2,12) to[looseness=1,out=-90,in=90] (0.8,9);
          \draw (1,12) to[looseness=1,out=-90,in=90] (1.4,9);
          \node at (-0.5,10) {$F_3$};
          
          \draw (-1.5,9) -- (-1.5,6);
          \draw (0.1,9) -- (0.1,6);
          \draw (0.8,9) to[looseness=10,out=-90,in=-90] (1.4,9);
          \node at (-0.5,7) {$F_4$};

          \draw (-1.5,6) -- (-1.5,3);
          \draw (0.1,6) -- (0.1,3);
          \node at (-0.5,4) {$Q$};
          \node[dot] at (-1,3.6) {};

          \draw (-1.5,3) -- (-1.5,0);
          \draw (0.1,3) -- (0.1,0);
          \draw (0.8,0) to[looseness=10,out=90,in=90] (1.4,0);
          \node at (-0.5,1) {$\ol{F}_4$};

          \draw (-1,-3) to[looseness=1,out=90,in=-90] (-1.5,0);
          \draw (0,-3) to[looseness=1,out=90,in=-90] (0.1,0);
          \draw (0.2,-3) to[looseness=1,out=90,in=-90] (0.8,0);
          \draw (1,-3) to[looseness=1,out=90,in=-90] (1.4,0);
          \node at (-0.5,-2) {$\ol{F}_3$};
          
          \draw (-1,-6) -- (-1,-3);
          \draw (0,-3) to[looseness=10,out=-90,in=-90] (0.2,-3);
          \draw (1,-6) -- (1,-3);
          \node at (-0.5,-5) {$\ol{F}_2$};
        \end{tikzpicture}}}
    \qedhere
    \]
  \end{itemize}
\end{proof}

\begin{proof}[Proof of Corollary~\ref{cor:main}]
  Let $d$ be the ribbon distance of the knot $K$ to the unknot $U$. We
  know $\KhL(U)\cong\Sigma^{0,1}\FF[X]$ and since we are working over
  a field with $2\neq 0$,
  \[
  \KhL(K)\cong\Sigma^{0,s(K)+1}\FF[X]\oplus\XTorsion(K).
  \]
  (If $d<\infty$, then $K$ is slice, and hence
  $s(K)=0$~\cite{Ras-kh-slice}, but we will not need this fact;
  indeed, this fact will follow from the proof.)

  By Theorem~\ref{thm:main}, and since $2\neq 0$,
  \[
  X^d\KhL(K)\cong\Sigma^{0,s(K)+1-2d}\FF[X]\oplus X^d\XTorsion(K)\cong
  X^d\KhL(U)\cong\Sigma^{0,1-2d}\FF[X],
  \]
  and therefore, $X^d\XTorsion(K)=0$, and hence, $d\geq \XOrder(K)$.
\end{proof}

\begin{example}\label{exam:d-is-2}
  Let $K$ be the connect sum of $8_{19}$ and its mirror. Using the
  function UniversalKh from the Mathematica package KnotTheory, we get
  $\XOrder(8_{19})=2$.  By Lemma~\ref{lem:xo-connect-sum},
  $\XOrder(K)\geq 2$, and hence by Corollary~\ref{cor:main}, the
  distance of $K$ from the unknot is at least $2$. Indeed, adding
  untwisted (blackboard-framed) bands along the thick lines in the
  following knot diagram for $K$ converts it to a $3$-component
  unlink, so $d(K,U)=2$.
  \[
  \begin{tikzpicture}[scale=0.5]
    \coordinate (m0) at (0,1.8);
    \coordinate (m1) at (0,-1);

    \coordinate (a0) at (1,0);
    \coordinate (a1) at (2,0);
    \coordinate (a2) at (3,1);
    \coordinate (a3) at (5,0);
    \coordinate (a4) at (3,-3);
    \coordinate (a5) at (4,3);
    \coordinate (a6) at (2.5,-1);
    \coordinate (a7) at (2,2.5);
    \coordinate (a8) at (3.7,2);

    \coordinate (b0) at (-1,0);
    \coordinate (b1) at (-2,0);
    \coordinate (b2) at (-3,1);
    \coordinate (b3) at (-5,0);
    \coordinate (b4) at (-3,-3);
    \coordinate (b5) at (-4,3);
    \coordinate (b6) at (-2.5,-1);
    \coordinate (b7) at (-2,2.5);
    \coordinate (b8) at (-3.7,2);

    \draw[knott] (m0) to[out=0,in=135] (a2);
    \draw[knott] (a4) to[out=180,in=-90] (a0);
    \draw[knott] (a5) to[out=0,in=0] (a6);
    \draw[knott] (a3) to[looseness=2,out=-90,in=-90] (a1);
    \draw[knott] (a7) to[out=0,in=150] (a8);

    \draw[knott] (a8) to[out=-30,in=90] (a3);
    \draw[knott] (a6) to[out=180,in=0] (m1);
    \draw[knott] (a2) to[out=-45,in=0] (a4);
    \draw[knott] (a0) to[out=90,in=180] (a7);
    \draw[knott] (a1) to[out=90,in=180] (a5);

    \draw[knott] (m0) to[out=180,in=45] (b2);
    \draw[knott] (b4) to[out=0,in=-90] (b0);
    \draw[knott] (b5) to[out=180,in=180] (b6);
    \draw[knott] (b3) to[looseness=2,out=-90,in=-90] (b1);
    \draw[knott] (b7) to[out=180,in=30] (b8);

    \draw[knott] (b8) to[out=-150,in=90] (b3);
    \draw[knott] (b6) to[out=0,in=180] (m1);
    \draw[knott] (b2) to[out=-135,in=180] (b4);
    \draw[knott] (b0) to[out=90,in=0] (b7);
    \draw[knott] (b1) to[out=90,in=0] (b5);

    \draw[ultra thick] (a7) to[out=90,in=90] (b7);
    \draw[ultra thick] (a5) to[out=90,in=90] (b5);

  \end{tikzpicture}
  \]
\end{example}

\begin{example}\label{exam:d-is-3}
  Let $K_M$ be the knot from~\cite{MM-kh-knight}, and let $K$ be the
  connect sum of $K_M$ and its mirror. Since the Lee spectral sequence
  for $K_M$ collapses at the $E_3$ page, we know $\XOrder(K_M)\geq 3$.
  Once again, by Lemma~\ref{lem:xo-connect-sum}, $\XOrder(K)\geq 3$,
  and hence by Corollary~\ref{cor:main}, the distance of $K$ from the
  unknot is at least $3$.
\end{example}

\bibliography{ribbonbibfile}

\providecommand{\bysame}{\leavevmode\hbox to3em{\hrulefill}\thinspace}
\providecommand{\MR}{\relax\ifhmode\unskip\space\fi MR }
% \MRhref is called by the amsart/book/proc definition of \MR.
\providecommand{\MRhref}[2]{%
  \href{http://www.ams.org/mathscinet-getitem?mr=#1}{#2}
}
\providecommand{\href}[2]{#2}
\begin{thebibliography}{CMW09}

\bibitem[AD]{AD-kh-unknotting}
Akram Alishahi and Nathan Dowlin, \emph{The {L}ee spectral sequence, unknotting
  number, and the knight move conjecture}, arXiv:1710.07875.

\bibitem[Bar05]{Bar-kh-tangle-cob}
Dror Bar-Natan, \emph{Khovanov's homology for tangles and cobordisms}, Geom.
  Topol. \textbf{9} (2005), 1443--1499. \MR{2174270 (2006g:57017)}

\bibitem[BLS17]{BLS-kh-pointed}
John~A. Baldwin, Adam~Simon Levine, and Sucharit Sarkar, \emph{Khovanov
  homology and knot {F}loer homology for pointed links}, J. Knot Theory
  Ramifications \textbf{26} (2017), no.~2, 1740004, 49. \MR{3604486}

\bibitem[BM]{KAT-kh-knotatlas}
Dror Bar-Natan, Scott Morrison, and et~al., \emph{The {K}not {A}tlas},
  \url{http://katlas.org/}.

\bibitem[CMW09]{CMW-kh-functoriality}
David Clark, Scott Morrison, and Kevin Walker, \emph{Fixing the functoriality
  of {K}hovanov homology}, Geom. Topol. \textbf{13} (2009), no.~3, 1499--1582.
  \MR{2496052}

\bibitem[CRS97]{CRS-knot-knottedsurfaces}
J.~Scott Carter, Joachim~H. Rieger, and Masahico Saito, \emph{A combinatorial
  description of knotted surfaces and their isotopies}, Adv. Math. \textbf{127}
  (1997), no.~1, 1--51. \MR{1445361 (98c:57023)}

\bibitem[Fox73]{Fox-top-ribbon}
Ralph~H. Fox, \emph{Characterizations of slices and ribbons}, Osaka J. Math.
  \textbf{10} (1973), 69--76. \MR{0326705}

\bibitem[Gor81]{Gor-top-ribbon}
C.~McA. Gordon, \emph{Ribbon concordance of knots in the {$3$}-sphere}, Math.
  Ann. \textbf{257} (1981), no.~2, 157--170. \MR{634459}

\bibitem[Jac04]{Jac-kh-cobordisms}
Magnus Jacobsson, \emph{An invariant of link cobordisms from {K}hovanov
  homology}, Algebr. Geom. Topol. \textbf{4} (2004), 1211--1251 (electronic).
  \MR{2113903 (2005k:57047)}

\bibitem[Kan10]{Kan-top-band}
Taizo Kanenobu, \emph{Band surgery on knots and links}, J. Knot Theory
  Ramifications \textbf{19} (2010), no.~12, 1535--1547. \MR{2755489}

\bibitem[Kho00]{Kho-kh-categorification}
Mikhail Khovanov, \emph{A categorification of the {J}ones polynomial}, Duke
  Math. J. \textbf{101} (2000), no.~3, 359--426. \MR{1740682 (2002j:57025)}

\bibitem[Kho02]{Kho-kh-tangles}
\bysame, \emph{A functor-valued invariant of tangles}, Algebr. Geom. Topol.
  \textbf{2} (2002), 665--741 (electronic). \MR{1928174 (2004d:57016)}

\bibitem[Kho06]{Kho-kh-cobordism}
\bysame, \emph{An invariant of tangle cobordisms}, Trans. Amer. Math. Soc.
  \textbf{358} (2006), no.~1, 315--327. \MR{2171235 (2006g:57046)}

\bibitem[KM11]{KM-kh-unknot}
P.~B. Kronheimer and T.~S. Mrowka, \emph{Khovanov homology is an
  unknot-detector}, Publ. Math. Inst. Hautes \'{E}tudes Sci. (2011), no.~113,
  97--208. \MR{2805599}

\bibitem[KR08]{KR-kh-matrixfactorizations}
Mikhail Khovanov and Lev Rozansky, \emph{Matrix factorizations and link
  homology}, Fund. Math. \textbf{199} (2008), no.~1, 1--91. \MR{2391017
  (2010a:57011)}

\bibitem[Lee05]{Lee-kh-endomorphism}
Eun~Soo Lee, \emph{An endomorphism of the {K}hovanov invariant}, Adv. Math.
  \textbf{197} (2005), no.~2, 554--586. \MR{2173845 (2006g:57024)}

\bibitem[LZ]{LZ-kh-ribbon}
Adam~Simon Levine and Ian Zemke, \emph{Khovanov homology and ribbon
  concordance}, arXiv:1903.01546.

\bibitem[MM]{MM-kh-knight}
Ciprian Manolescu and Marco Marengon, \emph{The knight move conjecture is
  false}, arXiv:1809.09769.

\bibitem[Mor]{Sco-kh-slides}
Scott Morrison, \emph{Universal {K}hovanov homology}, http://tqft.net/kyoto2.

\bibitem[NN82]{NN-top-ribbon}
Yasutaka Nakanishi and Yoko Nakagawa, \emph{On ribbon knots}, Math. Sem. Notes
  Kobe Univ. \textbf{10} (1982), no.~2, 423--430. \MR{704925}

\bibitem[Pic]{Pic-kh-conway}
Lisa Piccirillo, \emph{The {C}onway knot is not slice}, arXiv:1808.02923.

\bibitem[Ras10]{Ras-kh-slice}
Jacob Rasmussen, \emph{Khovanov homology and the slice genus}, Invent. Math.
  \textbf{182} (2010), no.~2, 419--447. \MR{2729272 (2011k:57020)}

\end{thebibliography}
\bibliographystyle{amsalpha}%myalpha for me, amsalpha for arxiv

\end{document}